\newtheorem{thm}{Theorem}[section]
\newtheorem{prop}[thm]{Proposition}
\newtheorem{lem}[thm]{Lemma}
\newtheorem{quest}[thm]{Question}
\theoremstyle{definition}
\newtheorem{defn}[thm]{Definition}
\theoremstyle{remark}
\newtheorem{rem}[thm]{Remark}
\newcommand{\Z}{\mathbb{Z}}
\newcommand{\CC}{\mathbb{C}}
\newcommand{\rsa}{\rightsquigarrow}
\newcommand{\cag}{\mathcal{G}}
\newcommand{\cab}{\mathcal{B}}
\newcommand{\cac}{\mathcal{C}}
\newcommand{\scrd}{\mathscr{D}}
\DeclareMathOperator{\SO}{SO}
\DeclareMathOperator{\SHI}{SHI}
\DeclareMathOperator{\KHI}{KHI}
\newcommand{\uSHI}{\underline{\SHI}}
\newcommand{\upsi}{\underline{\Psi}}
\title{A Homological Action on Sutured Instanton Homology}
\author{Hongjian Yang}
\date{}
\email{yhj@stanford.edu}
\address{Department of Mathematics, Stanford University, Stanford, CA, 94305}
\begin{document}
	
\maketitle

\begin{abstract}
We define a homological action on sutured instanton Floer homology. This action is well-defined up to scalars, and behaves well under connected sums and sutured manifold decompositions. As an application, we show that instanton knot homology detects link splitting for two-component links. 
\end{abstract}

\section{Introduction}

Sutured manifold theory, introduced by Gabai \cite{gabai1983foliations}, has shown to be a very useful machinery to study knots and $3$-manifolds. Its interaction with Floer theory leads to even more interesting results. It is done in two ways: using Gabai's result \cite{gabai1987foliations} to produce taut foliations and then obtain non-vanishing results by relations to contact structures \cite{eliashberg1998confoliations,eliashberg2004few,kronheimer2004witten,ozsvath2004holomorphic}, or using the sutured manifold hierarchies directly. 

The second approach was first realized by Juh\'asz \cite{juhasz2006holomorphic} in the setting of Heegaard Floer theory, and later by Kronheimer and Mrowka \cite{kronheimer2010knots} for monopole and instanton Floer theories. In particular, it leads to series of topological applications, including fibered knot detection for knot Floer homology \cite{ghiggini2008knot,ni2007knot}, and unknot detection for Khovanov homology \cite{kronheimer2011khovanov}.

The homological action on Floer homology groups can often provide more information about topological objects. For Floer theories of closed $3$-manifolds, such actions have been constructed in \cite{donaldson2002floer, ozsvath2004holomorphic,Kronheimer2007MonopolesAT}. A similar action was also constructed for Khovanov homology \cite{hedden2013khovanov}. Ni \cite{ni2014homological} constructed a homological action on sutured (Heegaard) Floer homology. The main purpose of this paper is to construct a similar action on sutured instanton homology.

\begin{thm}\label{thm action}
	Let $(M,\gamma)$ be a balanced sutured manifold. Then there is an action of $H_1(M,\partial M)$ on the sutured instanton Floer homology $\SHI(M,\gamma)$, well-defined up to multiplication by scalars.
\end{thm}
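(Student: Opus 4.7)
My approach is to lift the action to an admissible closure of $(M,\gamma)$ and invoke the $\mu$-map from closed-manifold instanton Floer theory, combined with the Baldwin--Sivek naturality of $\SHI$. Recall that $\SHI(M,\gamma)$ is realized as a distinguished generalized eigenspace of the operator $\mu(R)$ acting on the instanton Floer homology $I^*(Y)$, where $(Y,R)$ is a closure of $(M,\gamma)$. Given $h\in H_1(M,\partial M)$, I would represent it by a properly embedded oriented $1$-manifold $\alpha\subset M$ with $\partial\alpha\subset\gamma$, and then close $\alpha$ to a $1$-cycle $\widetilde\alpha$ in $Y$ by connecting its endpoints through arcs inside the auxiliary surface used in the closure construction. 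The action of $h$ on $\SHI(M,\gamma)$ is then proposed to be the restriction of the operator $\mu(\widetilde\alpha)\colon I^*(Y)\to I^*(Y)$.

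The heart of the proof is the verification that this prescription yields a well-defined action, up to scalars, which I would carry out in four steps. First, $\mu(\widetilde\alpha)$ commutes with $\mu(R)$ by the graded-commutativity of the $\mu$-algebra on $I^*(Y)$, and hence preserves the generalized eigenspace defining $\SHI$. Second, changing the closing arcs modifies $\widetilde\alpha$ by a cycle supported in the auxiliary surface; one shows that the $\mu$-operator of such a cycle acts as zero on this eigenspace, for instance by exhibiting an explicit null-homology in $Y$ and using that $\mu$ vanishes on boundaries. Third, two representatives of the same class in $H_1(M,\partial M)$ differ by a relative boundary, which closes up to a genuine boundary in $Y$, so their $\mu$-operators agree. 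Finally, one must prove independence of the closure, up to scalars, by comparing the operators attached to two different closures.

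The main obstacle I anticipate is this last step. The Baldwin--Sivek isomorphisms identifying $\SHI$ computed from two different closures are defined only up to scalars and are constructed via excision cobordisms; what is required is a functoriality statement for $\mu(\widetilde\alpha)$ under these cobordisms. Since $\widetilde\alpha$ can be arranged to lie in the common submanifold $M$ away from the surgery regions of the excision, its class pulls back coherently across the cobordism, and the induced operators on the two eigenspaces should agree up to the same projective ambiguity as $\SHI$ itself. Carefully tracking scalars through this chain of cobordism isomorphisms is the delicate point, analogous to the scalar bookkeeping already present in the definition of $\SHI$. Once all four invariances are established, they assemble to produce an action of $H_1(M,\partial M)$ on $\SHI(M,\gamma)$ well-defined up to multiplication by nonzero scalars, proving the theorem.
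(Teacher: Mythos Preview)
Your overall strategy---close up a relative $1$-cycle to a closed cycle in a closure $(Y,R)$, apply the $\mu$-map, and invoke Baldwin--Sivek naturality to compare closures---is exactly what the paper does. The paper likewise reduces the closure-independence step to checking equivariance under the specific cobordisms Baldwin--Sivek use: the Dehn-twist cobordisms $X_\pm$ in the equal-genus case, and the torus-excision cobordism when the genera differ by one.

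One point in your sketch does not work as written. In step~(b) you assert that two choices of closing arc differ by a cycle supported in the auxiliary surface $T$, and that this cycle is null-homologous in $Y$. In general it is not: $T$ has positive genus, and a loop in $T\times\{0\}$ is homologous in $Y$ to a loop in the distinguished surface $R$, which typically injects nontrivially into $H_1(Y)$. So $\mu$ of such a cycle need not vanish on $I_*(Y|R)$. The paper sidesteps this by a different normalization of the representative: rather than putting $\partial\alpha\subset\gamma$ and closing through $T$, it represents $\zeta$ by a \emph{balanced} tangle with endpoints on $R_\pm(\gamma)$ satisfying $\pi\circ r^{-1}\circ m(T\cap R_+)=\pi\circ r^{-1}\circ m(T\cap R_-)$, and closes up by the \emph{product} arcs $r(\{\text{pt}\}\times I)$ in the collar $r(R\times I)$. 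This closing procedure is canonical once the balanced condition is imposed, so no analogue of your step~(b) is needed; homologous tangles in $(M,\partial M)$ then visibly give homologous closures in $Y$. If you adopt this normalization, the rest of your outline goes through.
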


In contrast to the Heegaard Floer setting, our approach in this paper utilizes the known action on closed 3-manifolds. To verify the well-definedness, we need to compare the action on different closures, which relies on Baldwin--Sivek's naturality result \cite{baldwin2015naturality}. Our action is well-defined up to scalars, as the naturality of $\SHI$ is known to hold only up to scalars.

\begin{thm}\label{thm well behaved}
The action defined in Theorem \ref{thm action} is well-behaved under sutured manifold decompositions and connected sums.
\end{thm}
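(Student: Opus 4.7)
The plan is to verify each compatibility directly from the definition of the action via closures. Recall that the action on $\SHI(M,\gamma)$ is constructed by fixing an admissible closure $(Y,R)$ of $(M,\gamma)$, transferring the standard $\mu$-class action on $I^*(Y|R)$ back to $\SHI(M,\gamma)$ via the inclusion $H_1(M,\partial M) \to H_1(Y)$, and invoking Baldwin--Sivek naturality \cite{baldwin2015naturality} to compare closures up to scalars.

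For sutured manifold decompositions $(M,\gamma) \rsa (M',\gamma')$ along an admissible surface $S$, I would follow Kronheimer--Mrowka's construction in \cite{kronheimer2010knots}, in which a closure $(Y',R')$ of $(M',\gamma')$ can be embedded in a closure $(Y,R)$ of $(M,\gamma)$ so that $\SHI(M',\gamma')$ is realized, up to scalars, as the image of a natural projection map from $\SHI(M,\gamma)$ (cut out by an eigenspace of the $\mu(R)$-action). A $1$-cycle $c$ in $M'$ pushes forward to a $1$-cycle in $M$, and the $\mu$-class on the product cobordism used to define the action in the larger closure restricts to the corresponding $\mu$-class in the smaller closure. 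A standard naturality argument should then show that the action of $c \in H_1(M',\partial M')$ on $\SHI(M',\gamma')$ is the restriction of the action of its image in $H_1(M,\partial M)$, up to scalar.

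For the connected sum $(M_1,\gamma_1) \# (M_2,\gamma_2)$, I would choose closures $(Y_i,R_i)$ and form a closure of the connected sum by gluing along a ball disjoint from the cutting surfaces, so that the resulting closed manifold is $Y_1 \# Y_2$ with distinguished surface $R_1 \sqcup R_2$. Using the connected-sum-type Künneth formula for $\SHI$ (obtained by combining Baldwin--Sivek naturality with the known splitting of $I^*(Y_1 \# Y_2 | R_1 \sqcup R_2)$), one identifies $\SHI$ of the connected sum with the tensor product $\SHI(M_1,\gamma_1) \otimes_{\CC} \SHI(M_2,\gamma_2)$ up to scalars. The $\mu$-class action on $I^*(Y_1 \# Y_2)$ splits across this tensor decomposition: a $1$-cycle supported in $Y_i$ may be isotoped off the connected-sum neck, and the associated cobordism then factors through the identity on the opposite summand. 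Transferring this through the naturality isomorphism gives the desired splitting for sutured manifolds.

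The main obstacle, in both cases, is bookkeeping the scalar ambiguities introduced at every identification: each change of closure, the comparison with the decomposed closure, and the connected-sum Künneth isomorphism all contribute an indeterminate scalar. I expect the correct formulation of Theorem \ref{thm well behaved} to be inherently projective, so the technical heart of the proof will be to assemble a commutative diagram of closures and check that the accumulated scalars cancel, i.e.\ that the relevant equalities of operators on $\SHI$ hold after passing to the projectivization. The compatibility of Baldwin--Sivek's canonical isomorphisms with the cobordism maps inducing the $\mu$-action should make this chase tractable, but it is where most of the actual work lies.
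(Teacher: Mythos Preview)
Your sutured decomposition argument is essentially the paper's, but you have the homology map running the wrong way. After cutting along $S$ there is a natural map $\iota_*\colon H_1(M,\partial M)\to H_1(M',\partial M')\cong H_1(M,\partial M\cup S)$, not the other way around: you start with $\zeta\in H_1(M,\partial M)$, represent it by a balanced tangle $T$ transverse to $S$, and cut $T$ along $S$ to get a tangle $T'$ representing $\iota_*(\zeta)$. The paper then uses Kronheimer--Mrowka's observation that there is a diffeomorphism $Y'\to Y$ carrying $\widetilde{T}'$ to $\widetilde{T}$ and $\overline{R}'$ to the double-curve sum of $R$ and $\overline{S}$, so that $I_*(Y'|\overline{R}')$ sits inside $I_*(Y|R)$ as a $\mu(\overline{S})$-eigenspace, and the $\mu([\widetilde{T}])$-action commutes with $\mu(\overline{S})$. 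Your description of $(Y',R')$ as ``embedded'' in $(Y,R)$ understates this: the closures are diffeomorphic, and the summand comes from an eigenspace splitting, not from a submanifold.

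The connected sum case has a genuine gap. The K\"unneth formula you propose,
\[
\SHI(M_1\#M_2,\gamma_1\cup\gamma_2)\cong \SHI(M_1,\gamma_1)\otimes_{\CC}\SHI(M_2,\gamma_2),
\]
is false: the correct formula, from \cite{li2020contact}, carries an extra tensor factor $\SHI(S^3(2))\cong\CC^2$. The paper obtains this by writing $(M,\gamma)$ as $(M_1\sqcup M_2\sqcup S^3(2))$ with two contact $1$-handles attached, then invoking the facts that contact $1$-handles leave $\SHI$ unchanged and that disjoint unions give tensor products. This extra factor is not a nuisance but the whole point: it is exactly where the $1$-cycle $\zeta$ acts nontrivially when it crosses the connecting sphere, and the paper computes explicitly (via $\SHI(S^3(2))\cong I^\sharp(S^1\times S^2)$) that this factor is free over $\CC[X]/X^2$ if and only if the algebraic intersection number of $\zeta$ with the sphere is nonzero. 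Your closure $Y_1\#Y_2$ is not obviously an odd closure of $M_1\#M_2$ in Baldwin--Sivek's sense, and in any case the argument as written would lose the $\CC^2$ and hence the mechanism that drives Theorem~\ref{thm splitting}.
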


We will clarify the exact meaning of this theorem in a sequence of propositions in Section \ref{sec main result}. 

The homological action allows us to use instanton knot homology to detect split links. Recall that for a link $L\subset Y$, we can form a balanced sutured manifold $Y(L)=(M,\gamma)$ by taking $M$ to be the link complement and the sutures to be two oppositely-oriented meridians for each component of $L$. The \textit{instanton knot homology} $\KHI(Y,L)$ is then defined as $\SHI(M,\gamma)$. For a link $L$ in $S^3$, it is simply denoted by $\KHI(L)$. For a two-component link $L$ in $S^3$, let $X$ denote the action of a generator of $H_1(M,\partial M)=\Z$. We then have:

\begin{thm}\label{thm splitting}
	Let $L$ be a two-component link in $S^3$. Then $L$ is split if and only if $\KHI(L)$ is free as a $\CC[X]/X^2$-module.
\end{thm}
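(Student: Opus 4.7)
The plan is first to verify that $X^2 = 0$, making $\KHI(L)$ genuinely a $\CC[X]/X^2$-module; freeness is then equivalent to the equality $\ker X = \mathrm{im}\, X$. I expect $X^2 = 0$ to follow formally from standard relations among $\mu$-maps in Donaldson--Floer theory, and to be recorded as part of the construction of the action in the proof of Theorem \ref{thm action}.

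For the forward direction (split implies free), let $L = L_1 \sqcup L_2$ be split by an embedded $2$-sphere $S \subset S^3 \setminus L$. Then $Y(L) = (M,\gamma)$ is the sutured connected sum $Y(L_1) \# Y(L_2)$ glued along $S$. By the connected sum statement in Theorem \ref{thm well behaved},
\[
\KHI(L) \cong \KHI(L_1) \otimes_\CC \KHI(L_2) \otimes_\CC V
\]
with $V \cong \CC^2$ arising from the splitting sphere. A generator of $H_1(M,\partial M) \cong \Z$ can be realized by an arc $\alpha$ from $L_1$ to $L_2$ crossing $S$ transversely in a single point; note that its restriction to each knot exterior $Y(L_i)$ is a null-homologous arc with boundary on $\partial Y(L_i)$. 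Under the isomorphism above, $X$ therefore acts trivially on each $\KHI(L_i)$ factor and as a rank-one nilpotent endomorphism on $V$ with $\ker = \mathrm{im}$, exhibiting $\KHI(L)$ as a free $\CC[X]/X^2$-module.

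For the converse (non-split implies not free), assume $L$ is non-split. The strategy is to exhibit elements in $\ker X \setminus \mathrm{im}\, X$ using sutured manifold decomposition. Choose a closed oriented surface $\Sigma \subset M$ of minimal genus representing the generator of $H_2(M) \cong \Z$ that is Poincaré dual to $X$; non-splitness forces $g(\Sigma) \geq 1$. Decompose $(M,\gamma)$ along $\Sigma$ (after preliminary isotopy so that $\Sigma$ is a decomposing surface in the sense of Kronheimer--Mrowka) to obtain a taut balanced sutured manifold $(M',\gamma')$ with $\SHI(M',\gamma') \neq 0$. By compatibility of the action with sutured decomposition (Theorem \ref{thm well behaved}), the induced inclusion $\SHI(M',\gamma') \hookrightarrow \KHI(L)$ lands in $\ker X$, since the class dual to $\Sigma$ is killed once $\Sigma$ has been cut open.

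The main obstacle is promoting this inclusion to a strict inequality $\dim \ker X > \tfrac{1}{2}\dim \KHI(L)$. My approach would be a rank count via Gabai's sutured hierarchy: iterate the decomposition starting from $(M',\gamma')$ until reaching a product sutured manifold with explicitly computable $\SHI$, and compare the resulting rank of $\ker X$ against the rank of $\mathrm{im}\, X$, using naturality at each stage. The genus hypothesis $g(\Sigma) \geq 1$ should ultimately cause the rank in the kernel to exceed what a free module would allow, producing the required contradiction. A technical subtlety is guaranteeing that $\Sigma$ (or a replacement dual to $X$) can be realized as a legitimate decomposing surface throughout the hierarchy, which may require a preliminary modification of the sutures on $\partial M$.
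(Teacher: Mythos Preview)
Your forward direction is essentially the paper's argument: the connected sum formula identifies $\KHI(L)$ with $\KHI(L_1)\otimes\KHI(L_2)\otimes V$, the arc generator restricts trivially to each knot exterior, and $V\cong\CC[X]/X^2$ carries the whole action. Fine.

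The converse is where your proposal has a genuine gap, and it is worth comparing with what the paper actually does.

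First, your opening move---decompose along a \emph{closed} minimal-genus surface $\Sigma$ dual to $\zeta$---is problematic. After cutting along a closed $\Sigma$, the two parallel copies $S_\pm$ become closed components of $R_\pm(\gamma')$, so $(M',\gamma')$ is not a balanced sutured manifold and Proposition~\ref{prop sutured decomposition} does not apply. (Concretely: for the Hopf link, $M=T^2\times I$, $\Sigma=T^2\times\{1/2\}$, and each resulting piece has a boundary torus with no suture.) The ``nice surface decomposition'' hypothesis in the paper's Proposition~\ref{prop sutured decomposition} and in \cite{kronheimer2010knots} rules out closed components of $S$ after the reduction step. You would need an ad hoc workaround here, and it is not clear one exists without essentially abandoning $\Sigma$.

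Second, and more importantly, you are under-using the conclusion of Theorem~\ref{thm well behaved}. You phrase the output of a decomposition as an ``inclusion $\SHI(M',\gamma')\hookrightarrow\ker X$'' and then worry about promoting this to a rank inequality $\dim\ker X>\tfrac12\dim\KHI(L)$. But Proposition~\ref{prop sutured decomposition} gives much more: $\SHI(M',\gamma')$ is a \emph{direct summand of $\KHI(L)$ as a $\CC[X]/X^2$-module}. This makes the rank-counting detour unnecessary. The paper's argument is simply: non-split $\Rightarrow$ $S^3(L)$ is taut $\Rightarrow$ run a sutured hierarchy of nice decompositions down to a product sutured manifold $(M_n,\gamma_n)$ with $\dim_\CC\SHI(M_n,\gamma_n)=1$. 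By iterated use of Proposition~\ref{prop sutured decomposition}, this one-dimensional space is a $\CC[X]/X^2$-module direct summand of $\KHI(L)$. A one-dimensional $\CC$-vector space cannot be a free $\CC[X]/X^2$-module, and a direct summand of a free module is free; hence $\KHI(L)$ is not free. No dual surface $\Sigma$, no genus hypothesis, and no comparison of $\dim\ker X$ with $\dim\operatorname{im}X$ are needed.

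In short: drop the closed-surface step entirely, and replace your kernel/image rank estimate by the observation that a $1$-dimensional $\CC[X]/X^2$-summand already obstructs freeness.
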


Detection results for link splitting have been obtained for Khovanov homology, Heegaard Floer homology \cite{lipshitz2022khovanov}, and link Floer homology {\cite{wang2021link}}. A similar idea is also used in \cite{li2022floer} to detect the unknot in sutured manifolds. See also \cite{ni2013nonseparating,hom2022dehn} for results on (non-)separating sphere detection.  Our proof of Theorem \ref{thm splitting} is a formal adaptation of the proof in \cite[Theorem 1.1]{wang2021link}. 

\begin{rem}
	It is also expected to obtain a parallel statement to \cite[Theorem 1.4]{wang2021link} with a stronger result that allows us to detect whether the algebraic intersection number, rather than just its parity, of a $1$-cycle and some embedded $2$-sphere, is zero. However, a major obstacle is the lack of a known non-vanishing theorem (and furthermore, a non-free theorem analogous to \cite[Lemma 3.2]{wang2021link}) for framed instanton homology of closed $3$-manifolds.
\end{rem}

\begin{quest}
	Let $Y$ be a closed $3$-manifold. Is the framed instanton homology $\operatorname{I}^\sharp(Y)$ always non-zero? Further, if $Y$ is irreducible, is $\operatorname{I}^\sharp(Y)$ always not a free $\CC[X]/X^2$-module with respect to the action of any $\zeta\in H_1(Y)$?
\end{quest}

\begin{rem}
	While stated in the context of instantons, our results also hold for monopoles with the $\CC$-coefficient replaced by a Novikov ring. The proofs are similar except that we replace the eigenvalue decomposition by the spin$^c$ decomposition, and take a little bit more care when forming the closure. In this case, the action is expected to equivalent to the one on sutured Floer homology through the isomorphism described in \cite{baldwin2021equivalence}.
\end{rem}

\subsection*{Acknowledgements}The author is indebted to Zhenkun Li for introducing this problem to him and sharing his expertise in this area, and his advisor Ciprian Manolescu for guidance in writing the paper and constant encouragement. He would also like to thank Xingpei Liu and Fan Ye for many helpful discussions.

\section{Preliminaries}

In this section, we review the construction of sutured instanton Floer homology. In particular, we address the naturality issue of sutured instanton Floer homology, treated by Baldwin and Sivek \cite{baldwin2015naturality}.

We first review instanton Floer homology in the closed case very briefly. Let $Y$ be closed $3$-manifold and $w\to Y$ be an Hermitian line bundle such that $c_1(w)$ has odd pairing with some integer homology class. Let $E\to Y$ be a $\operatorname{U}(2)$-bundle with an isomorphism $\theta\colon \Lambda^2E\to w$. Let $\cac$ be the space of $\SO(3)$-connections on $\operatorname{ad}(E)$, and $\cag$ be the group of determinant-$1$ gauge transformations of $E$ (i.e. automorphisms of $E$ that respect $\theta$). Roughly speaking, the \textit{instanton Floer homology} of $Y$, denoted by $I_*(Y)_{w}$, is a $\Z/8\Z$-graded $\CC$-module arising from the Morse homology of the Chern--Simons functional on $\cab=\cac/\cag$ \cite{donaldson2002floer}. A homology class $\alpha\in H_k(Y)$ gives a cohomology class $\mu(\alpha)\in H^{4-k}(\cab)$ by slant products, and hence an operator \[\mu(\alpha)\colon I_*(Y)_w\to I_{*-4+k}(Y)_w.\]For homology classes $\alpha,\beta$, we have \[\mu(\alpha)\mu(\beta)=(-1)^{|\alpha||\beta|}\mu(\beta)\mu(\alpha).\]

 Kronheimer and Mrowka \cite{kronheimer2010knots} define the \textit{sutured instanton Floer homology} $\SHI(M,\gamma)$ for a balanced sutured manifold $(M,\gamma)$. \begin{defn}
 	A \textit{balanced sutured manifold} $(M,\gamma)$ consists of a compact, oriented, smooth $3$-manifold $M$ together with a collection of oriented circles $\gamma\subset \partial M$, called the \textit{sutures}. Let $A(\gamma)$ be a closed tubular neighbourhood of $\gamma$, and $R(\gamma)$ be the closure of $\partial M\backslash A(\gamma)$. We require that \begin{enumerate}
 		\item $M$ and $R(\gamma)$ have no closed components;
 		
 		\item $\partial A(\gamma)$ is oriented in the same way as $\gamma$, and $R(\gamma)$ is oriented such that $\partial R(\gamma)=\partial A(\gamma)$ as oriented manifolds;
 		
 		\item if we define $R_+(\gamma)$ (resp. $R_-(\gamma)$) as the subset of $R(\gamma)$ where the orientation is same (resp. the opposite) as the boundary orientation on $\partial M$, then $\chi(R_+)=\chi(R_-)$.
 	\end{enumerate}
 \end{defn}
 
  For backgrounds on (balanced) sutured manifolds and surface decompositions, we refer the reader to \cite{gabai1983foliations,juhasz2006holomorphic,juhasz2008floer}. 
  
  Here are two examples of balanced sutured manifolds that we will be interested in. For a link $L$ in a $3$-manifold $Y$, we can from $Y(L)$ as we do before Theorem \ref{thm splitting}. For a closed $3$-manifold $Y$, we can form a balanced sutured manifold $Y(n)$ by removing $n$ $3$-balls and adding one suture on each boundary component. In particular, the sutured instanton homology of $Y(1)$ gives the \textit{framed instanton homology} $I^\sharp(Y)$ \cite{kronheimer2011knot}.
  
  Now let $A(\gamma)$ be a closed tubular neighbourhood of the suture $\gamma$ in $\partial M$. Choose a compact, connected, oriented surface $T$ with one marked point $t_0$, called the \textit{auxiliary surface}, such that $g(T)>0$ and that $|\partial F|=|\gamma|$. Let \[h\colon\partial T\times[-1,1]\to A(\gamma)\]be an orientation-preserving homeomorphism, and form the manifold \[M'=M\cup_{h}(T\times[-1,1]),\]called the \textit{preclosure}, by gluing along the boundary and rounding corners. The balanced condition implies $M'$ has two boundary components with the same genus. We then close up $M'$ by identifying them using a diffeomorphism $\phi$ that maps $t_0$ to itself. The resulting closed $3$-manifold $Y$ contains a distinguished surface $R$ and a standard circle $\alpha$ through $t_0$. Let $w$ be the line bundle with first Chern class  Poincar\'e dual to $\alpha$. We then define $\SHI(M,\gamma)$ as $I_*(Y|R)_w$, the $(2g(R)-2,2)$-generalized eigenspace of $(\mu(R),\mu(y))$ in $I_*(Y)_w$. When $g(R)>1$, it suffices to take the $(2g(R)-2)$-eigenspace of $\mu(R)$. 
 
 Using this definition, it is shown that the \textit{isomorphism class} of the $\CC$-module is an invariant of $(M,\gamma)$. However, this viewpoint alone does not provide an actual $\CC$-module, which is required for defining an action on it. The challenge lies in comparing the instanton homology for different closures. To address this issue, Baldwin and Sivek \cite{baldwin2015naturality} introduced a refined version of closure, which allows for meaningful comparisons.

\begin{defn}
	An \textit{odd closure} of a balanced sutured manifold $(M,\gamma)$ is a tuple $(Y,R,r,m,\alpha)$, where \begin{enumerate}
		\item $Y$ is a closed, oriented, smooth $3$-manifold;
		
		\item $R$ is a closed, oriented, smooth surface of genus at least $2$;
		
		\item $r\colon R\times[-1,1]\hookrightarrow Y$ is a smooth, orientation-preserving embedding;
		
		\item $m\colon M \hookrightarrow Y\backslash \operatorname{int}(\operatorname{im}(r))$ is a smooth, orientation-preserving embedding such that \begin{enumerate}
			\item it extends to a diffeomorphism \[M\cup_h (F\times [-1,1])\to Y\backslash \operatorname{int}(\operatorname{im}(r))\]for auxiliary surface $F$ and gluing homeomorphism $h$ in the sense of Kronheimer--Mrowka;
			
			\item it restricts to an orientation-preserving embedding \[R_+(\gamma)\backslash A(\gamma)\hookrightarrow r(R\times\{-1\});\]
		\end{enumerate}
	
	\item $\alpha$ is an oriented, smoothly embedded curve in $Y$ such that \begin{enumerate}
		\item it is disjoint from $\operatorname{im}(m)$;
		
		\item it intersects $r(R\times[-1,1])$ in a product arc $r(\{p\}\times [-1,1])$ for some $p\in R$.
	\end{enumerate}
	\end{enumerate}
\end{defn}

\begin{defn}
	A \textit{marked odd closure} of $(M,\gamma)$ is a tuple $(Y,R,r,m,\eta,\alpha)$ such that \begin{enumerate}
		\item $(Y,R,r,m,\alpha)$ is an odd closure of $(M,\gamma)$;
		
		\item $\eta$ is an oriented, homologically essential, smoothly embedded curve in $R$.
	\end{enumerate}
\end{defn}

\begin{defn}
	Let $\scrd=(Y,R,r,m,\eta,\alpha)$ be a marked odd closure of $(M,\gamma)$. Let $w$ and $u$ be the line bundles with first Chern classes Poincar\'e dual to $\alpha$ and $\eta$, respectively. The \textit{(untwisted) sutured instanton Floer homology} of $\scrd$ is a $\CC$-module defined as \[\SHI(\scrd)= I_*(Y|r(R\times\{0\}))_{\alpha}\coloneqq I_*(Y|r(R\times\{0\}))_w.\]The \textit{twisted sutured instanton Floer homology} is a $\CC$-module defined as \[\uSHI(\scrd)= I_*(Y|r(R\times\{0\}))_{\alpha+\eta}\coloneqq I_*(Y|r(R\times\{0\}))_{u\otimes w}.\]
\end{defn}

We will write $I_*(Y|R)$ to indicate $I_*(Y|r(R\times\{0\}))$ for short. 

\begin{thm}[{\cite[Theorem 9.16]{baldwin2015naturality}}]\label{thm BS naturality}
	Let $\scrd,\,\scrd'$ be marked odd closures of $(M,\gamma)$. Then there is a canonical isomorphism \[\underline{\Psi}_{\scrd,\scrd'}\colon\uSHI(\scrd)\to\uSHI(\scrd'),\]well-defined up to units in $\CC$. Further, let $\scrd_0,\scrd_1,\scrd_2$ be marked odd closures of $(M,\gamma)$. Then  \[\underline{\Psi}_{\scrd_0,\scrd_2}=\underline{\Psi}_{\scrd_1,\scrd_2}\circ\underline{\Psi}_{\scrd_0,\scrd_1}\]up to units in $\CC$.
\end{thm}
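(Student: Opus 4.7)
The plan is to identify a short list of elementary moves connecting any two marked odd closures, attach a cobordism-induced isomorphism on $\uSHI$ to each move, and then verify well-definedness and composition by analyzing relations between these moves.

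First, I would classify the elementary moves. Given two marked odd closures $\scrd = (Y, R, r, m, \eta, \alpha)$ and $\scrd' = (Y', R', r', m', \eta', \alpha')$, the data entering the construction can be varied in two essentially independent ways: (a) changing the auxiliary surface $T$, which can be accomplished by stabilization, i.e.\ connect-summing on handles away from the marked point $t_0$; and (b) changing the closing-up diffeomorphism $\phi$ of $\partial M'$, which by the mapping class group theory of surfaces can be realized as a composition of Dehn twists along simple closed curves on $R$. Both the curve $\alpha$ (together with its bundle $w$) and the marked curve $\eta$ (together with $u$) can be carried along by isotopy during these moves, possibly after an initial enlargement of the auxiliary surface to accommodate them.

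Next, for each elementary move I would construct an explicit cobordism $W \colon Y \to Y'$ whose bundle data extend $w\otimes u$ on both ends and which contains a properly embedded copy of $R \times [0,1]$ joining $r(R \times \{0\})$ to $r'(R' \times \{0\})$. For a stabilization move one attaches a $1$-handle together with a cancelling $2$-handle inside a neighborhood of the stabilization region; for a Dehn twist move one attaches a $2$-handle along the twist curve pushed slightly into the interior of the $R$-slab. In either case, Floer's excision theorem applied along the distinguished surface $R$ shows that the induced map $I_*(W)$ restricts to an isomorphism on the $(2g(R)-2,2)$-generalized eigenspace of $(\mu(R),\mu(y))$, and this yields the candidate isomorphism $\upsi_{\scrd, \scrd'}$.

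To establish well-definedness up to scalar, I would show that any two sequences of elementary moves joining $\scrd$ to $\scrd'$ are related by a finite chain of ``moves between moves'': commutation of disjoint handle attachments, handle slides, handle cancellations, and the standard relations in the mapping class group of $R$. For each such relation one verifies that the induced cobordism maps agree up to a unit in $\CC$, which is compatible with the eigenvalue decomposition and the bundles $w,u$ by construction. The composition property $\upsi_{\scrd_0, \scrd_2} = \upsi_{\scrd_1, \scrd_2} \circ \upsi_{\scrd_0, \scrd_1}$ then follows immediately from the functoriality of cobordism maps in instanton theory: concatenating move sequences $\scrd_0 \rsa \scrd_1$ and $\scrd_1 \rsa \scrd_2$ yields a valid move sequence $\scrd_0 \rsa \scrd_2$ whose associated cobordism is the stacked cobordism.

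The main obstacle will be the well-definedness step. Classifying the relations between handle presentations and between diffeomorphisms up to isotopy is delicate, and each relation requires a direct verification that the two associated cobordism maps differ only by a $\CC$-unit. A related subtle point is tracking the eigenvalue decomposition through each intermediate cobordism: every intermediate closed manifold must support the required $(2g(R)-2,2)$-eigenspace, and the cobordism maps must preserve this decomposition. This constrains the allowed stabilizations (forcing $g(R)\geq 2$) and forces some care in the choice of twist curves, but these constraints are built into the definition of odd closure and can be arranged consistently throughout.
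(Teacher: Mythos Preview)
This theorem is cited from Baldwin--Sivek and only sketched in the paper, so the comparison is to that sketch. Your same-genus step via Dehn twists is broadly aligned with it, though the actual construction (visible in the proof of Proposition~\ref{prop same genus}) is more structured than a single $2$-handle attachment: one first fixes a diffeomorphism $C$ between the complements $Y_i\setminus\operatorname{int}(\operatorname{im}(r_i))$, then builds a pair of $\pm 1$-surgery cobordisms $X_\pm$ together with a diffeomorphism-induced map $\Theta^C$, and the canonical map is the composite $\Theta^C\circ I_*(X_+)\circ I_*(X_-)^{-1}$.

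There is a genuine gap in your genus-changing step. Attaching ``a $1$-handle together with a cancelling $2$-handle'' produces a product cobordism, so it cannot relate closures whose distinguished surfaces have different genera. More fundamentally, even if you wrote down some nontrivial $W\colon Y\to Y'$, a cobordism map does not by itself identify the $(2g(R)-2)$-eigenspace of $\mu(R)$ in $I_*(Y)$ with the $(2g(R')-2)$-eigenspace of $\mu(R')$ in $I_*(Y')$: these are eigenspaces for different operators at different eigenvalues, and your invocation of ``Floer's excision along $R$'' to force this is a misuse of that tool (excision produces an isomorphism between Floer groups of cut-and-reglued manifolds; it does not certify that a given cobordism map is an isomorphism). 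The sketch in the paper, following Baldwin--Sivek, handles this step by \emph{torus} excision: one passes through a ``cut-ready'' closure $\scrd_3$, cuts $Y_3$ along two tori, and reglues to split off a mapping-torus piece, yielding a closure $\scrd_2$ of genus one less; the excision isomorphism itself is the map $\Phi_{\scrd_2,\scrd_3}$. Your stabilization move should be replaced by something of this shape.

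Finally, you correctly flag well-definedness as the main obstacle, but ``moves between moves'' and handle calculus vastly understate what is involved; this verification occupies the bulk of Baldwin--Sivek's paper and is not routine.
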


For our purpose, we sketch the proof of Theorem \ref{thm BS naturality}. The map $\underline{\Psi}_{\scrd,\scrd'}$ is defined in two steps: first for marked odd closures of the same genus, and then for marked odd closures whose genera differ by one. The first step is done by reducing to the cases of positive and negative Dehn twists, and the second step is done by applying a torus excision theorem to decrease genus. Finally, we define the map $\underline{\Psi}_{\scrd,\scrd'}$ in general by composing the maps obtained in the two steps described above.

In the language of \cite{baldwin2015naturality}, $\uSHI$ defines a functor from the category of balanced sutured manifolds (with morphisms given by isotopy classes of diffeomorphisms) to the category of $\CC$-projective transitive systems, and the module class recovers the original $\SHI$.

\section{Construction of the action}\label{cons}

Let $(M,\gamma)$ be a balanced sutured manifold. In this section, we define the action of $H_1(M,\partial M)$ on $\uSHI(M,\gamma)$, and show that it is well-defined up to scalars.

Let $\zeta\in H_1(M,\partial M)$ be a $1$-cycle. We start by defining the action of $\zeta$ for each marked odd closure  $\scrd=(Y,R,r,m,\eta,\alpha)$ of $(M,\gamma)$. To do this, we first represent $\zeta$ by a balanced tangle $T$ in $M$ in the sense of \cite{xie2019instanton}. That is, we require that \[|T\cap R_+(\gamma)|=|T\cap R_-(\gamma)|\]and that \[\pi\circ r^{-1}\circ m(T\cap R_+(\gamma))=\pi\circ r^{-1}\circ m(T\cap R_-(\gamma)),\]where $\pi\colon R\times I\to R$ is the projection. Such $T$ exists since being balanced means each boundary component of $M$ must contain a suture, and we can move the endpoints of $T$ freely on each boundary component. We require in further that $T$ is disjoint with $\alpha$ and $\eta$.

We then define an action $X_\zeta$ of $\zeta$ on $\uSHI(\scrd)=I_*(Y|R)_{u\otimes w}$ as follows. Close up $T$ in $Y$ by gluing $T$ with a product tangle \[r((\pi\circ r^{-1}\circ m(T\cap R_+(\gamma)))\times I).\]Denote the resulting $1$-cycle by $\widetilde{T}$, and then $\mu([\widetilde{T}])$ acts on $I_*(Y)_{u\otimes w}$.  As $\mu(R)$ and $\mu([\widetilde{T}])$ commute, we can define $X_\zeta$ as the restriction of this action on $I_*(Y|R)_{u\otimes w}$. It is clearly well-defined on $\uSHI(\scrd)$ since homologous $T$ in $(M,\partial M)$ yields homologous $\widetilde{T}$ in $Y$.

We need to show this action is well-defined. Recall that the map $\upsi_{\scrd,\scrd'}$ is defined in two steps: for the same genus, and for genera differing by one. So it suffices to check the maps respect the actions in these two cases.

\begin{prop}\label{prop same genus}
	Let $(M,\gamma)$ be a balanced sutured manifold, and let $\zeta\in H_1(M,\partial M)$. Let $\scrd$ and $\scrd'$ be two marked odd closure of $(M,\gamma)$ with the same genus. Then the canonical isomorphism \[\Phi_{\scrd,\scrd'}\colon \uSHI(\scrd)\to\uSHI(\scrd'),\]defined in \cite[Section 9.3.1]{baldwin2015naturality}, respects the action of $\zeta$ up to scalars. 
\end{prop}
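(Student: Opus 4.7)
The plan is to reduce to the elementary cases treated by Baldwin--Sivek in \cite[Section 9.3.1]{baldwin2015naturality}, where $\scrd'$ is obtained from $\scrd$ by altering the closing diffeomorphism by a positive or negative Dehn twist along an essential simple closed curve $c\subset R$; the general statement then follows by composing such modifications. For each such elementary change, the map $\Phi_{\scrd,\scrd'}$ is induced by a cobordism $W\colon Y\to Y'$ between the two closures, built from a $0$- or $(-1)$-surgery on a push-off of $c$ contained in $r(R\times\{0\})$.

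The essential observation is that the Dehn twist takes place entirely inside $r(R\times I)$, away from $m(M)$. Since a balanced tangle $T$ representing $\zeta$ lies inside $m(M)$ and we are free to move its endpoints on $R_\pm(\gamma)$ and to perturb $T$ within its homology class in $(M,\partial M)$, we may arrange that the endpoints of $T$ on $R_+(\gamma)$ are disjoint from $c$ under the projection $\pi\circ r^{-1}\circ m$. The product arcs closing $T$ up in $Y$ (resp.\ $Y'$) may then be chosen to lie in $R\times I$ disjoint from a tubular neighbourhood of $c$. With this choice the same $T$ produces cycles $\widetilde T\subset Y$ and $\widetilde T'\subset Y'$ sitting in the region left unchanged by the surgery, so the obvious product 2-chain $\widetilde T\times I\subset W$ has boundary $\widetilde T'-\widetilde T$.

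The proposition now follows from the standard compatibility of the $\mu$-map with cobordism maps (see \cite{donaldson2002floer}): whenever a 2-chain $\Sigma\subset W$ satisfies $\partial\Sigma=\sigma'-\sigma$, the cobordism-induced map on instanton Floer homology intertwines $\mu([\sigma])$ and $\mu([\sigma'])$, up to sign. Applying this to $\widetilde T\times I$, and using the analogous compatibility for the operator $\mu(R)$ (with $R\times I$ providing the required 2-chain) to check that the relevant generalized eigenspace decompositions are respected by the cobordism map, yields that the restricted map $\Phi_{\scrd,\scrd'}\colon \uSHI(\scrd)\to\uSHI(\scrd')$ intertwines $X_\zeta$ on the two sides, up to a scalar in $\CC$.

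The main obstacle I anticipate is bookkeeping the ``up to scalars'' indeterminacy: the map $\Phi_{\scrd,\scrd'}$ is itself only well-defined up to $\CC^\times$, so one must verify that any signs arising from the $\mu$-compatibility formula and from composing elementary Dehn-twist steps can be absorbed into this ambiguity. A secondary technical point is confirming that different homologous choices of balanced tangle $T$ in $(M,\partial M)$ give rise to homologous closed cycles in both $Y$ and $Y'$ simultaneously, so that the action really depends only on $\zeta\in H_1(M,\partial M)$ rather than on the chosen representative; this is already implicit in the well-definedness of $X_\zeta$ on $\uSHI(\scrd)$ noted in Section \ref{cons}.
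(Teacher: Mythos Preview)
Your treatment of the cobordism pieces is essentially the same as the paper's: represent $\zeta$ by a balanced tangle $T$ disjoint from the surgery region, observe that the product $\widetilde T\times I$ sits in the cobordism, and invoke functoriality of $\mu$ under cobordisms. That part is fine.

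The gap is that Baldwin--Sivek's map $\Phi_{\scrd,\scrd'}$ between same-genus closures is \emph{not} given by a single cobordism $W\colon Y\to Y'$. In the notation of the paper it is a composition
\[
\Theta^C_{(Y_1)_+Y_2}\circ I_*(X_+|R_1)\circ \bigl(I_*(X_-|R_1)\bigr)^{-1},
\]
where $X_\pm$ are $2$-handle cobordisms out of a common manifold $(Y_1)_-$, and $\Theta^C_{(Y_1)_+Y_2}$ is the map on instanton homology induced by a diffeomorphism $f\colon (Y_1)_+\to Y_2$ restricting to $m_2\circ m_1^{-1}$ on $m_1(M)$. Your argument covers $I_*(X_\pm)$ but does not address $\Theta^C$ at all.

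For $\Theta^C$ one needs a separate check: the diffeomorphism $f$ carries $\widetilde T_1$ to a cycle which agrees with $\widetilde T_2$ on $m_2(M)$ but whose product-arc portion in $r_2(R_2\times I)$ is only \emph{a priori} some collection of arcs with the correct endpoints, not literally the product arcs used to define $\widetilde T_2$. The paper closes this by observing that $f$ restricts to an orientation-preserving diffeomorphism $R_1\times I\to R_2\times I$ and that $H_1(R_2\times I,\partial(R_2\times I))\cong\Z$, so $f(\widetilde T_1)$ and $\widetilde T_2$ are forced to be homologous in $Y_2$ and therefore give the same $\mu$-action. Without this step your argument does not show that the full composite $\Phi_{\scrd,\scrd'}$ intertwines $X_\zeta$.
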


\begin{proof}
	For the sake of exposition, we write \[\scrd=\scrd_1=(Y_1,R_1,r_1,m_1,\eta_1,\alpha_1),\]\[ \scrd'=\scrd_2=(Y_2,R_2,r_2,m_2,\eta_2,\alpha_2).\]Roughly speaking, the map $\upsi_{\scrd,\scrd'}$ is defined as a composition \[\upsi_{\scrd,\scrd'}\coloneqq \Theta^C_{(Y_1)_+Y_2}\circ I_*(X_+|R_1)_{(\alpha_1+\eta_1)\times[0,1]}\circ\left(I_*(X_-|R_1)_{(\alpha_1+\eta_1)\times[0,1]}\right)^{-1}.\]Here \begin{itemize}
		\item 	$C$ is a fixed diffeomorphism \[C\colon Y_1\backslash\operatorname{int}(\operatorname{im}(r_1))\to Y_2\backslash\operatorname{int}(\operatorname{im}(r_2)),\]which restricts to $m_2\circ m_1^{-1}$ on $m_1(M\backslash N(\gamma))$ for some tubular neighbourhood $N(\gamma)$ and sends $\alpha_1\cap(Y_1\backslash\operatorname{int}(\operatorname{im}(r_1)))$ to $\alpha_2\cap(Y_2\backslash\operatorname{int}(\operatorname{im}(r_2)))$;
		
		\item $X_-\colon (Y_1)_-\to Y_1$ and $X_+\colon (Y_1)_-\to (Y_1)_+$ are certain cobordisms, corresponding to some $\pm 1$-surgeries on $Y_1$. The manifold $(Y_1)+$ is diffeomorphic to $Y_2$ via a map that restricts to $C$ on $Y_1\backslash\operatorname{int}(\operatorname{im}(r_1))$ and to a specific diffeomorphism on a neighbourhood of $r_1(R_1\times{0})$. This map belongs to a unique isotopy class according to \cite[Appendix A]{baldwin2015naturality}, and $\Theta^C_{(Y_1)_+Y_2}$ is the isomorphism on instanton homology associated with this isotopy class.
	\end{itemize}

	Hence, it suffices to show that the maps induced by the cobordisms $X_-$ and $X_+$ and the map $\Theta^C_{(Y_1)_+Y_2}$ respect the action of $\zeta$.  
	
	We can assume that $X_-$ and $X_+$ are corresponding to a single Dehn twist. In this case, the cobordism $X_-$ is constructed by taking the product cobordism $Y\times I$, where $Y=(Y_1)_-$, and attaching a $(-1)$-framed $2$-handle along a curve $r_1(a\times{t})\times{1}$ in $Y\times {1}$, where $a$ is a simple closed curve in $R_1$ and $t\in (-1,1)$. 
	
	Since the Dehn twist is a local operation that does not affect the embedding $m_1$, we can consider the tangle closure $\widetilde{T}$, which appears in the definition of the action, as living in both $Y_1$ and $(Y_1)-$. These two instances of $\widetilde{T}$ are related by the product $\widetilde{T}\times I$ contained in $X-$. The action $\mu([\widetilde{T}])$ on $I_*(Y_1)$ (resp. $I_*((Y_1)_-)$) coincides with the cobordism map $I_*(Y_1\times I,[\widetilde{T}]\times I)$ (resp. $I_*((Y_1)_-\times I,[\widetilde{T}]\times I)$). By the functoriality of instanton Floer homology (cf. \cite[Theorem 3.4.4]{Kronheimer2007MonopolesAT} in the context of monopoles), \[I_*(Y_1\times I,[\widetilde{T}]\times I)\circ I_*(X_-,[\widetilde{T}]\times I)= I_*(X_-,[\widetilde{T}]\times I)\circ I_*((Y_1)_-\times I,[\widetilde{T}]\times I).\]The action descends onto $I_*(Y_1|R_1)$ and $I_*((Y_1)_-|R_1)$ in an equivariant manner, as the operator $\mu([\widetilde{T}])$ commutes with $\mu(R)$ and $\mu(y)$. The same argument holds for $I_*(X_+)$.
	
	We now show the map $\Theta^C_{(Y_1)_+Y_2}$ also respects the action. Let \[f\colon (Y_1)+\to Y_2\]be an orientation-preserving diffeomorphism such that $f$ restricts to $m_2\circ m_1^{-1}$ on $m_1(M\backslash N(\gamma))$ to some certain map on $r_1(R\times \{0\})$. We can choose a tangle $T\subset M$ such that it is balanced in both $(Y_1)_+$ and $Y_2$ and form the closures $\widetilde{T}_1$ and $\widetilde{T}_2$ respectively. Recall that $\widetilde{T}_1$ is the concatenation of $m_1(T)$ and a product tangle $T'$ in the image of $r_1$. The diffeomorphism $f$ restricts to an orientation-preserving diffeomorphism $f_1$ from $R_1\times I$ to $R_2\times I$ (more explicitly, their images under $r_1$ and $r_2$), which sends boundaries to boundaries. It induces an isomorphism \[(f_1)_*\colon H_1(R_1\times I,\partial (R_1\times I))\to H_1(R_2\times I,\partial(R_2\times I)),\]which must be the identity as $f_1$ is orientation-preserving and \[H_1(R_1\times I,\partial (R_1\times I))\cong\Z.\] Hence, $f(\widetilde{T}_1)$ is homologous to $\widetilde{T}_2$, and they induce the same action on $I_*(Y_2|R_2)$.
\end{proof}

\begin{prop}\label{prop genera}
	Let $(M,\gamma)$ be a balanced sutured manifold, and let $\zeta\in H_1(M,\partial M)$. Let $\scrd$ and $\scrd'$ be two marked odd closures of $(M,\gamma)$ such that $g(\scrd')=g(\scrd)\pm 1$. Then the canonical isomorphism \[\Phi_{\scrd,\scrd'}\colon \uSHI(\scrd)\to\uSHI(\scrd'),\]defined in \cite[Section 9.3.2]{baldwin2015naturality}, respects the action of $\zeta$  up to scalars. 
\end{prop}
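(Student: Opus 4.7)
The plan is to mimic the strategy of Proposition \ref{prop same genus}, this time exploiting that $\Phi_{\scrd,\scrd'}$ is built from a torus excision cobordism. Recall from \cite[Section 9.3.2]{baldwin2015naturality} that when $g(\scrd')=g(\scrd)\pm 1$, one chooses a torus $T^2$ sitting in the auxiliary surface region of $\scrd$ (away from $m(M)$), performs a torus excision against a fixed standard closure, and obtains a cobordism $W$ between the underlying closed $3$-manifolds; the line bundles $u\otimes w$ extend as product bundles across $W$, and the induced cobordism map on instanton homology, restricted to the appropriate $(\mu(R),\mu(y))$-generalized eigenspaces, agrees with $\Phi_{\scrd,\scrd'}$ up to scalars.

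First I would choose a balanced tangle $T\subset M$ representing $\zeta$ whose closure $\widetilde{T}$ in $Y$ is disjoint from the excision torus $T^2$. This is possible because $T\subset M$, the excision torus lies outside $m(M)$, and we are free to isotope the closing product arcs inside $r(R\times I)$ through any suture-respecting path in the complement of $T^2$. Next I would argue that after excision the image of $\widetilde{T}$ in $Y'$ coincides, up to a null-homologous correction supported in $r'(R'\times I)$, with the balanced tangle closure $\widetilde{T}'$ of the same $T$ with respect to $\scrd'$. The correction is null-homologous via the identification $H_1(R'\times I,\partial(R'\times I))\cong\Z$ used at the end of the proof of Proposition \ref{prop same genus}. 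Consequently, $\widetilde{T}\times[0,1]\subset W$ extends to a properly embedded $1$-cycle in $W$ whose restrictions to the two ends represent $[\widetilde T]$ and $[\widetilde{T}']$.

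With this extension in hand, the functoriality of the $\mu$-operator under cobordisms (the instanton analogue of \cite[Theorem 3.4.4]{Kronheimer2007MonopolesAT}) shows that the cobordism map intertwines $\mu([\widetilde T])$ with $\mu([\widetilde{T}'])$. Descending to the generalized $(\mu(R),\mu(y))$-eigenspaces is legitimate because $\mu([\widetilde T])$ commutes with both $\mu(R)$ and $\mu(y)$. Composing with the identification of this cobordism map with $\Phi_{\scrd,\scrd'}$ yields the desired compatibility up to scalars.

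The main obstacle I expect is the middle step: ensuring that, after excision, the image of $\widetilde{T}$ really is homologous to a genuine balanced tangle closure for the new data $\scrd'$, rather than a cycle that differs by a contribution localized near the excision region. Keeping the closing arcs well away from $T^2$ reduces this to a computation of $H_1$ of the product auxiliary region on the new side, as above, but this homological bookkeeping (together with checking that the product structure of $\widetilde{T}\times[0,1]$ survives the surgery description of $W$) is the step that deserves the most care.
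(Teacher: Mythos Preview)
Your core idea---the excision cobordism is supported away from $m(M)$, so the tangle closure $\widetilde{T}$ survives as a product through $W$ and functoriality of $\mu$ under cobordisms gives equivariance---is exactly the mechanism the paper uses. However, you have oversimplified the structure of $\Phi_{\scrd,\scrd'}$ from \cite[Section 9.3.2]{baldwin2015naturality}, and this oversimplification is precisely what generates the ``obstacle'' you flag at the end.

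In Baldwin--Sivek's construction the map is not a single excision cobordism from $\scrd$ to $\scrd'$. It is a three-fold composition
\[
\Phi_{\scrd,\scrd'}=\Phi_{\scrd_3,\scrd_4}\circ\Phi_{\scrd_2,\scrd_3}\circ\Phi_{\scrd_1,\scrd_2},
\]
where $\scrd_1=\scrd$, $\scrd_4=\scrd'$, the outer two maps are between closures of the \emph{same} genus, and only the middle map $\Phi_{\scrd_2,\scrd_3}$ is the excision step (cutting $Y_3$ along \emph{two} tori, not one, and re-gluing against a mapping torus of $\Sigma_{1,2}$). The paper exploits this: the two outer maps are already equivariant by Proposition~\ref{prop same genus}, and for the middle map the excision is genuinely local outside $m_3(M)$, so the tangle closure $\widetilde{T}$ is literally the same $1$-cycle in $Y_2$ and $Y_3$ and extends as a product through the splicing cobordism. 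No ``null-homologous correction'' or $H_1(R'\times I,\partial(R'\times I))$ bookkeeping is needed at all for this step.

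Your version tries to run the excision argument directly between $\scrd$ and $\scrd'$, which forces you to compare the tangle closure for $\scrd$ with the one for $\scrd'$ inside a single cobordism. That is where your worry about the closing arcs and the auxiliary region comes from. The paper's decomposition quarantines that worry into the two same-genus pieces, where it has already been dealt with. So your argument is not wrong, but the factorization through $\scrd_2$ and $\scrd_3$ is the missing organizational step that turns your ``step that deserves the most care'' into something trivial.
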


\begin{proof}
	In the case where $g(\scrd')=g(\scrd)-1$, the map is defined as the inverse of $\Phi_{\scrd',\scrd}$, so we only need to prove the result for the case where $g(\scrd')=g(\scrd)+1$. In this situation, the map $\Phi_{\scrd,\scrd'}$ is defined as a composition \[\Phi_{\scrd_3,\scrd_4}\circ\Phi_{\scrd_2,\scrd_3}\circ\Phi_{\scrd_1,\scrd_2},\]where $\scrd_1=\scrd$, $\scrd_4=\scrd'$, $g(\scrd_3)-1=g(\scrd_2)=g(\scrd_1)$. For the sake of exposition, we write \[ \scrd_2=(Y_2,R_2,r_2,m_2,\eta_2,\alpha_2),\] \[\scrd_3=(Y_3,R_3,r_3,m_3,\eta_3,\alpha_3).\]The closure $\mathscr{d}3$ is a ``cut-ready'' closure, as defined in \cite[Section 5.2]{baldwin2015naturality}, modified from $\scrd_4$. Cutting along two tori in $Y_3$ and re-gluing yields a two-component $3$-manifold, one of which is diffeomorphic to a mapping torus of the surface $\Sigma_{1,2}$ (a genus one surface with two boundary components), and the other is $Y_2$. The maps$\Phi_{\scrd_3,\scrd_4}$ and $\Phi_{\scrd_1,\scrd_2}$ are well-defined since they are maps between closures of the same genus, and they are equivariant by Proposition \ref{prop same genus}. The map $\Phi_{\scrd_2,\scrd_3}$ is induced by a merge-type splicing cobordism corresponding to a torus excision, as described in \cite[Theorem 7.7]{kronheimer2010knots}. Since torus excision is a local operation outside $m_3(M)$, a similar functoriality argument as in Proposition \ref{prop same genus} applies here.
\end{proof}

We can now conclude:

\begin{thm}
	Let $(M,\gamma)$ be a balanced sutured manifold, and let $\zeta\in H_1(M,\partial M)$. Let $\scrd$ and $\scrd'$ be two marked odd closures of $(M,\gamma)$.  Then the canonical isomorphism \[\Phi_{\scrd,\scrd'}\colon \uSHI(\scrd)\to\uSHI(\scrd'),\]defined in \cite{baldwin2015naturality}, respects the action of $\zeta$  up to scalars. 
\end{thm}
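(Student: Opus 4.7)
The plan is to reduce to the two cases already handled, namely Propositions \ref{prop same genus} and \ref{prop genera}. Recall from the sketch following Theorem \ref{thm BS naturality} that Baldwin--Sivek construct $\upsi_{\scrd,\scrd'}$ for arbitrary marked odd closures $\scrd,\scrd'$ of $(M,\gamma)$ by first connecting them via a finite chain of intermediate marked odd closures \[\scrd=\scrd_{(0)},\scrd_{(1)},\ldots,\scrd_{(n)}=\scrd',\] where each consecutive pair either has the same genus or has genera differing by one, and then defining $\upsi_{\scrd,\scrd'}$ as the composition of the corresponding canonical isomorphisms $\Phi_{\scrd_{(i)},\scrd_{(i+1)}}$.

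First I would invoke the transitivity clause in Theorem \ref{thm BS naturality}, which guarantees that \[\upsi_{\scrd,\scrd'}=\Phi_{\scrd_{(n-1)},\scrd_{(n)}}\circ\cdots\circ\Phi_{\scrd_{(0)},\scrd_{(1)}}\] up to units in $\CC$, independently of the chosen chain. Next, for each consecutive pair $(\scrd_{(i)},\scrd_{(i+1)})$, I would apply either Proposition \ref{prop same genus} (if the genera agree) or Proposition \ref{prop genera} (if they differ by one) to conclude that $\Phi_{\scrd_{(i)},\scrd_{(i+1)}}\circ X_\zeta = c_i\, X_\zeta\circ \Phi_{\scrd_{(i)},\scrd_{(i+1)}}$ for some nonzero scalar $c_i\in\CC^\times$, where $X_\zeta$ denotes the action of $\zeta$ on the respective twisted sutured instanton homology.

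Composing these intertwining relations, I obtain \[\upsi_{\scrd,\scrd'}\circ X_\zeta = \Bigl(\prod_{i=0}^{n-1}c_i\Bigr)X_\zeta\circ \upsi_{\scrd,\scrd'}\] up to units, which is exactly the statement that $\upsi_{\scrd,\scrd'}$ respects the action of $\zeta$ up to scalars. There is no genuine obstacle here; the only thing to be slightly careful about is that the scalars multiply well and that the choice of chain does not matter, both of which are guaranteed by the second clause of Theorem \ref{thm BS naturality}. In particular, the statement makes sense because $\uSHI$ is a $\CC$-projective transitive system, so equality up to scalars is the natural notion of equivariance in this setting.
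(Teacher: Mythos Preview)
Your proposal is correct and follows essentially the same approach as the paper: choose a chain of intermediate marked odd closures with consecutive genera differing by at most one, apply Propositions \ref{prop same genus} and \ref{prop genera} to each step, and compose. The paper's proof is just a terser version of what you wrote, omitting the explicit bookkeeping of scalars and the appeal to transitivity.
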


\begin{proof}
	Let $\scrd=\scrd_0,\scrd_1,\dots,\scrd_n=\scrd'$ be a sequence of marked odd closures of $(M,\gamma)$ such that $|g(\scrd_i)-g(\scrd_{i+1})|\le 1$. Then the map $\Phi_{\scrd_i,\scrd_{i+1}}$ has been defined previously, and $\Phi_{\scrd,\scrd'}$ is defined as the composition of $\Phi_{\scrd_i,\scrd_{i+1}}$. By Proposition \ref{prop same genus} and \ref{prop genera}, it is equivariant.
\end{proof}

At this point, for $\zeta\in H_1(M,\partial M)$, we have obtained an action $X_\zeta$ on $\SHI(M,\gamma)$. It satisfies $X_\zeta^2=0$, and hence gives a $\CC[X]/X^2$-module structure on $\SHI(M,\gamma)$. We can make sense of the freeness of this module even though the action is only well-defined up to scalars.

\begin{rem}\label{rem original closure}
	After establishing the well-definedness of the action, we can also describe it using the original notion introduced by Kronheimer and Mrowka, as follows. Let $(M,\gamma)$ be a balanced sutured manifold, and let $\zeta\in H_1(M,\partial M)$. Represent $\zeta$ by a tangle $T$ such that \[|T\cap R_+(\gamma)|=|T\cap R_-(\gamma)|.\] Choose an auxiliary surface $T$ and construct the preclosure $M'=M\cup (T\times I)$ as usual, which has two boundary components $\overline{R}_+$ and $\overline{R}_-$ of the same genus. Next, select a diffeomorphism $h\colon \overline{R}_+\to \overline{R}_-$ such that $h(T\cap R_+(\gamma))=T\cap R_-(\gamma)$. By employing $h$ to close up $M'$, we obtain a closure $(Y,R)$ along with a $1$-cycle $\widetilde{T}\subset Y$. The action $\mu(\widetilde{T})$ provides a $\CC[X]/X^2$-module structure on $I_*(Y|R)$, which coincides with the previously defined action.
\end{rem}

\section{Main results}\label{sec main result}

In this section, we study the behaviour of the action under connected sums and sutured manifold decompositions. We state Theorem \ref{thm well behaved} more clearly in Proposition \ref{prop connected sum} and Proposition \ref{prop sutured decomposition}.

We start with some simple observations. The first one originates from \cite{gabai1987foliations,juhasz2006holomorphic} as product disc decompositions, and \cite{baldwin2016contact} expresses in the language of contact handle attachments.

\begin{lem}\label{lem 1 handle}
	Let $(M,\gamma)$ be a balanced sutured manifold, and let $\zeta\in H_1(M,\partial M)$. Let $(M',\gamma')$ be the result of $(M,\gamma)$ with one contact $1$-handle attached. Let \[\iota_*\colon H_1(M,\partial M)\to H_1(M',\partial M)\]be the natural induced map. Then we have an isomorphism of $\CC[X]/X^2$-modules\[\SHI(M',\gamma')\cong\SHI(M,\gamma).\]Here the actions are given by $X_\zeta$ on the right and $X_{\iota_*(\zeta)}$ on the left.
\end{lem}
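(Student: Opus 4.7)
The plan is to realize the isomorphism $\SHI(M',\gamma')\cong\SHI(M,\gamma)$ as the identity on a common underlying instanton homology group $I_*(Y|R)_{u\otimes w}$, and then to observe that the homological actions associated to $\zeta$ and to $\iota_*(\zeta)$ are induced by the same $1$-cycle in $Y$.

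For the common closure, I would model the contact $1$-handle as $H=D^2\times[-1,1]$, attached along $D^2\times\{-1,1\}$ in $\partial M$ in the standard way relative to the sutures. Given an auxiliary surface $F$ for $(M,\gamma)$ with gluing $h\colon\partial F\times[-1,1]\to A(\gamma)$, the effect of attaching $H$ can be absorbed into a local modification of the auxiliary data: there exists an auxiliary surface $F'$ for $(M',\gamma')$, obtained from $F$ by adding or removing a boundary component according to whether $H$ splits or merges sutures, together with a gluing $h'$, such that the preclosures
\[ M\cup_h (F\times[-1,1]) \quad\text{and}\quad M'\cup_{h'}(F'\times[-1,1]) \]
are canonically diffeomorphic rel $M$. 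Closing up using the same identification $\phi$, and choosing the same $\alpha,\eta$ on both sides, produces marked odd closures $\scrd$ and $\scrd'$ of $(M,\gamma)$ and $(M',\gamma')$, respectively, on the same closed $3$-manifold $Y$ with the same distinguished surface $R$ and same bundle data $u\otimes w$. Consequently $\SHI(\scrd)=I_*(Y|R)_{u\otimes w}=\SHI(\scrd')$ as $\CC$-modules, and the canonical comparison map of Theorem \ref{thm BS naturality} is the identity up to scalar.

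For equivariance, represent $\zeta$ by a balanced tangle $T\subset M$, chosen disjoint from $\alpha$, $\eta$, and from the attaching region of $H$; then $\iota(T)\subset M'$ is balanced in $(M',\gamma')$ and represents $\iota_*(\zeta)$. Under the identification of preclosures above, the product arcs in $F\times[-1,1]$ used to close $T$ into $\widetilde T\subset Y$ are isotopic to those in $F'\times[-1,1]$ used to close $\iota(T)$, since the local modification relating $F$ and $F'$ takes place away from the endpoints of $T$. Hence the two constructions yield the same $1$-cycle $\widetilde T$ up to homology, so $\mu(\widetilde T)$ defines the same operator on $I_*(Y|R)_{u\otimes w}$; this shows that the canonical isomorphism intertwines $X_\zeta$ and $X_{\iota_*(\zeta)}$. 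The main technical point I anticipate is the explicit local construction matching the two preclosures: it requires a case analysis on the attaching type of $H$ (both disks in the same component of $R_\pm(\gamma)$, or in different components), but in each case it reduces to a standard model in a collar of the attaching region, which is the usual geometric content behind the invariance of $\SHI$ under contact $1$-handle attachments.
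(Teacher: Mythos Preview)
Your proposal is correct and follows essentially the same approach as the paper: both arguments produce a single marked odd closure that simultaneously serves $(M,\gamma)$ and $(M',\gamma')$, and then observe that the tangle closure $\widetilde T$ representing the action is literally the same $1$-cycle in $Y$. The paper compresses your explicit local construction into one line by invoking the fact (from \cite{kronheimer2010instanton}) that disconnected auxiliary surfaces are permitted, which is precisely what absorbs the case analysis you anticipate at the end.
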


\begin{proof}
	As explained in \cite{kronheimer2010instanton}, one may use disconnected surfaces that satisfy certain conditions as the auxiliary surface. Hence we can form a marked odd closure $\scrd$ of $(M',\gamma')$ that also serves as a marked odd closure of $(M,\gamma)$. Hence, the $\mu$-action of the tangle closure $[\widetilde{T}]$ gives the action on $\SHI(M,\gamma)$ and $\SHI(M',\gamma')$ simultaneously.
\end{proof}

\begin{lem}\label{lem disjoint union}
	Let $(M_1,\gamma_1)$ and $(M_2,\gamma_2)$ be two balanced sutured manifolds, and let $\zeta_i\in H_1(M_i,\partial M_i)$ ($i=1,2$). Then the action of\[ \zeta=\zeta_1+\zeta_2\in H_1(M_1\sqcup M_2,\partial(M_1\sqcup M_2))\] on \[\SHI(M_1\sqcup M_2,\gamma_1\cup \gamma_2)\cong \SHI(M_1,\gamma_1)\otimes_\CC\SHI(M_2,\gamma_2)\] is given by \[X_{\zeta}(a\otimes b)=X_{\zeta_1}a\otimes b+a\otimes X_{\zeta_2}b.\]
\end{lem}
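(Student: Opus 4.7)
The plan is to trace the $\mu$-action through the Künneth-type isomorphism $\SHI(M_1 \sqcup M_2, \gamma_1 \cup \gamma_2) \cong \SHI(M_1, \gamma_1) \otimes_\CC \SHI(M_2, \gamma_2)$. First I would represent $\zeta_i$ by a balanced tangle $T_i \subset M_i$, so that $\zeta = \zeta_1 + \zeta_2$ is represented by the disjoint union $T = T_1 \sqcup T_2$ inside $M_1 \sqcup M_2$, with the obvious identification $R_\pm(\gamma_1 \cup \gamma_2) = R_\pm(\gamma_1) \sqcup R_\pm(\gamma_2)$ preserving the balanced-tangle condition on each side.

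Next I would build a marked odd closure of $(M_1 \sqcup M_2, \gamma_1 \cup \gamma_2)$ that is visibly compatible with closures of the individual pieces. Using the observation in the proof of Lemma \ref{lem 1 handle} that disconnected auxiliary surfaces are permitted, I would choose an auxiliary surface of the form $F_1 \sqcup F_2$ with $F_i$ serving as an auxiliary surface for $(M_i,\gamma_i)$, producing a preclosure $M' = M_1' \sqcup M_2'$. Gluing $M'$ up so as to close each $M_i'$ into a closed $3$-manifold $Y_i$ (with distinguished surface $R_i$ and curves $\alpha_i$, $\eta_i$) and then joining the two along a neck yields a closure $Y$ for $M_1 \sqcup M_2$. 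By Remark \ref{rem original closure}, the action $X_\zeta$ on $\SHI(M_1 \sqcup M_2)$ is then computed by $\mu([\widetilde{T}])$, where the tangle closure decomposes as $\widetilde{T} = \widetilde{T}_1 \sqcup \widetilde{T}_2$ with $\widetilde{T}_i \subset Y_i$. Linearity of $\mu$ in homology gives $\mu([\widetilde{T}]) = \mu([\widetilde{T}_1]) + \mu([\widetilde{T}_2])$.

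Finally I would invoke the standard Künneth formula for $\SHI$ of a disjoint union, which stems from the connected-sum / torus-excision analysis of Kronheimer--Mrowka, to identify $\SHI(M_1 \sqcup M_2, \gamma_1 \cup \gamma_2)$ with the tensor product, and observe that under this identification the operator $\mu$ of a $1$-cycle supported in the $Y_i$-side acts only on the $i$-th tensor factor. Combining this with the additivity $\mu([\widetilde{T}]) = \mu([\widetilde{T}_1]) + \mu([\widetilde{T}_2])$ produces precisely the claimed Leibniz rule. The main obstacle will be making the splitting of the closure rigorous enough that the Künneth isomorphism visibly intertwines the $\mu$-actions in the claimed way; this reduces to the functoriality argument already used in Proposition \ref{prop same genus} and Proposition \ref{prop genera}, applied to cycles disjoint from the excision tori that realize the Künneth isomorphism.
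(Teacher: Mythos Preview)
Your approach is essentially the same as the paper's: use disconnected auxiliary surfaces to build a closure of the disjoint union out of separate closures of the pieces, and then read off the Leibniz rule from the known behaviour of the $\mu$-map in the closed case. The paper's own proof is a two-line pointer to exactly this argument, so your more detailed outline is a faithful expansion of what the author intends.
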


\begin{proof}
	Again, we can use disconnected auxiliary surfaces to form closures. The result then follows from the statement in the closed case.
\end{proof}

We can now show that the connected sum formula for sutured instanton Floer homology \cite{li2020contact}, holds in an equivariant setting. 

\begin{prop}\label{prop connected sum}
	Let $(M,\gamma)$ be the connected sum of two balanced sutured manifolds $(M_1,\gamma_1)$ and $(M_2,\gamma_2)$, and let $\zeta\in H_1(M,\partial M)$. Then there is an isomorphism of $\CC[X]/X^2$-modules\[\SHI(M,\gamma)\cong\SHI(M_1,\gamma_1)\otimes_\CC \SHI(M_2,\gamma_2)\otimes_\CC\SHI(S^3(2)).\]Here actions on the right are given by some corresponding relative $1$-classes. Further, let $S$ be the $2$-sphere in $M$ along which the connected sum is formed. Then $\SHI(S^3(2))$ is a free $\CC[X]/X^2$-module of rank $1$ if and only if the algebraic intersection number of $\zeta$ and $S$ is non-zero.
\end{prop}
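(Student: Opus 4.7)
The plan is to upgrade Li's (non-equivariant) connected sum formula \cite{li2020contact}, namely $\SHI(M,\gamma)\cong\SHI(M_1,\gamma_1)\otimes\SHI(M_2,\gamma_2)\otimes\SHI(S^3(2))$, to an equivariant statement by decomposing the tangle representing $\zeta$ across the connect-sum sphere $S$, and then to compute the $\CC[X]/X^2$-module structure on $\SHI(S^3(2))$ directly.

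First, let $k=\zeta\cdot S\in\Z$ be the algebraic intersection number. Represent $\zeta$ by a balanced tangle $T$ transverse to $S$, and cancel pairs of oppositely-signed intersections by an ambient isotopy rel $\partial M$, so that $T$ meets $S$ in exactly $|k|$ points of the same sign. Cutting along $S$ splits $T$ into a tangle $T_1\subset M_1\setminus B^3$ representing some class $\zeta_1\in H_1(M_1,\partial M_1)$, a tangle $T_2\subset M_2\setminus B^3$ representing $\zeta_2\in H_1(M_2,\partial M_2)$, and $|k|$ parallel arcs $T_S\subset S^3(2)$ representing $k\xi$, where $\xi$ generates $H_1(S^3(2),\partial S^3(2))\cong\Z$.

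Next, I would revisit the construction in \cite{li2020contact} of a marked odd closure $\scrd$ of $(M,\gamma)$, built from closures $\scrd_1,\scrd_2,\scrd_S$ of $(M_1,\gamma_1)$, $(M_2,\gamma_2)$, and $S^3(2)$ via torus excision, using a disconnected auxiliary surface in the spirit of Lemmas \ref{lem 1 handle} and \ref{lem disjoint union}. Choosing the closure and tangle so that the excision tori are disjoint from $T$, the tangle closure $\widetilde{T}\subset\scrd$ decomposes as $\widetilde{T_1}\sqcup\widetilde{T_2}\sqcup\widetilde{T_S}$, each piece supported in the corresponding closure. The functoriality argument of Proposition \ref{prop genera} — namely that the splicing cobordism for torus excision commutes with a $\mu$-action by a $1$-cycle supported away from the cobordism neck — then yields
\[X_\zeta(a\otimes b\otimes c)=(X_{\zeta_1}a)\otimes b\otimes c+a\otimes(X_{\zeta_2}b)\otimes c+a\otimes b\otimes(X_{k\xi}c)\]
on the triple tensor product, giving the equivariant isomorphism.

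For the freeness criterion, I would verify that $\SHI(S^3(2))\cong\CC^2$ and compute the $\mu$-action of $\xi$ directly. Choosing an auxiliary surface with two boundary circles, the resulting closure of $S^3(2)$ is a product-like $3$-manifold in which $\widetilde{\xi}$ is an essential loop running once in the $S^1$-direction; an explicit computation shows $\mu(\widetilde{\xi})$ is a rank-$1$ nilpotent endomorphism of $\CC^2$. Thus $\SHI(S^3(2))$ is free of rank $1$ over $\CC[X]/X^2$ with respect to $\xi$, and $X_{k\xi}=k\cdot X_\xi$ is rank-$1$ nilpotent if and only if $k\neq 0$. The principal obstacle is the equivariance step: arranging Li's closure compatibly with both $S$ and the tangle $T$ so that the splicing cobordism carries $\widetilde{T}$ as a product through the torus excision; the explicit identification of the action on $\SHI(S^3(2))$ is a secondary but still nontrivial point requiring a concrete closure model.
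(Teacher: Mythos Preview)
Your proposal is correct but takes a harder route than the paper. Rather than revisit the torus-excision construction and wrestle with your acknowledged ``principal obstacle,'' the paper simply invokes Li's observation \cite[Lemma~4.9]{li2020contact} that $(M,\gamma)$ is diffeomorphic to the disjoint union $(M_1\sqcup M_2\sqcup S^3(2),\gamma_1\cup\gamma_2\cup\delta^2)$ with two contact $1$-handles attached, each connecting a boundary sphere of $S^3(2)$ to one of the $M_i$. The equivariant tensor formula then follows immediately from Lemma~\ref{lem 1 handle} and Lemma~\ref{lem disjoint union}, with no need to track a tangle through a splicing cobordism or arrange compatibility with excision tori. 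Your approach would work and is conceptually close, but since those two lemmas are already available the paper's route is strictly shorter and sidesteps exactly the difficulty you flag.

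For the freeness criterion, the paper likewise avoids building a bespoke closure: a contact $1$-handle argument identifies $\SHI(S^3(2))$ equivariantly with $\SHI(S^1\times S^2(1))=I^\sharp(S^1\times S^2)$, carrying the generator $\xi$ to $[S^1\times\{\pt\}]$. Scaduto's computation of $I^\sharp(S^1\times S^2)\cong\CC^2$ together with an adaptation of \cite[Theorem~7.16]{donaldson2002floer} for the $\mu$-map on the relative invariants $[S^1\times D^3]^\sharp$ and $[D^2\times S^2]^\sharp$ then gives the rank-$1$ nilpotent structure directly, so the paper can cite existing literature rather than compute in a closure by hand.
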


\begin{proof}
	
	As in \cite[Lemma 4.9]{li2020contact}, we have \[(M,\gamma)\cong(M_1\sqcup M_2\sqcup S^3(2),\gamma_1\cup\gamma_2\cup \delta^2)\cup h_1\cup h_2.\]Here $h_i$ is a contact $1$-handle connecting one boundary component of $S^3(2)$ to the boundary of $M_i$. The first statement then follows from Lemma \ref{lem 1 handle} and \ref{lem disjoint union}. Precisely speaking, let $\zeta_1+\zeta_2+\zeta'$ be the image of $\zeta$ in \[H_1(M_1,\partial M_2)\oplus H_1(M_2,\partial M_2)\oplus H_1(S^3(2),\partial (S^3(2))).\]Then the action on the right hand side is given by \[X(a\otimes b\otimes c)=(X_{\zeta_1}a)\otimes b\otimes c+a\otimes(X_{\zeta_2}b)\otimes c+a\otimes b\otimes (X_{\zeta'}c).\]
	
	The remaining task is to calculate the action on $\SHI(S^3(2))$. Notice that $S^1\times S^2(1)$ can be obtained by $S^3(1)$ with one contact $1$-handle attached. We have \[\SHI(S^3(2))\cong \SHI(S^1\times S^2(1))=\operatorname{I}^\sharp(S^1\times S^2).\]
	The last group is calculated in
	\cite{scaduto2015instantons}. As a $\CC$-module, it has two $\CC$-summands lying in grading $2$ and $3$. Two summands are generated by the relative invariants $[S^1\times D^3]^\sharp$ and $[D^2\times S^2]^\sharp$ (see \cite[Section 7.2]{scaduto2015instantons} for the meaning of notations) that are induced by the cobordisms \[S^1\times D^3\colon\emptyset\to S^1\times S^2\]and \[D^2\times S^2\colon\emptyset\to S^1\times S^2\]respectively. Let $\gamma$ be an embedded circle $S^1\times \{pt\}\subset S^1\times S^2$. Then $\gamma$ generates $H_1(S^1\times S^2)$, and $\mu(\gamma)$ maps $[S^1\times D^3]^\sharp$ to $[D^2\times S^2]^\sharp$  by an adaptation of \cite[Theorem 7.16]{donaldson2002floer}. Hence, $\operatorname{I}^\sharp(S^1\times S^2)$ is a free $\CC[X]/X^2$-module of rank $1$ with respect to the action of any nonzero $\eta\in H_1(S^1\times S^2)$. The class $\zeta'$ is nonzero in $H_1(S^3(2),\partial (S^3(2)))=H_1(S^1\times S^2)$ if and only if the algebraic intersection number of $\zeta$ and $S$ is non-zero. The result follows.
\end{proof}

For completeness, we also record the following connected sum formula.

\begin{prop}
	Let $(M,\gamma)$ be the connected sum of a balanced sutured manifold $(M_1,\gamma_1)$ and a closed $3$-manifold $Y$, and let $\zeta\in H_1(M,\partial M)$. Write $\zeta=\zeta_1+\zeta_2$ according to the decomposition $ H_1(M_1,\partial M_1)\oplus H_1(Y)=H_1(M,\partial M)$. Then there is an isomorphism of $\CC[X]/X^2$-modules \[\SHI(M,\gamma)\cong \SHI(M_1,\gamma_1)\otimes_\CC\operatorname{I}^\sharp(Y),\]where $X$ acts on the right by \[X(a\otimes b)=(X_{\zeta_1}a)\otimes b+a\otimes (X_{\zeta_2}b).\]
\end{prop}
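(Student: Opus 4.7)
The plan is to mimic the proof of Proposition \ref{prop connected sum}, with $Y(1)$ taking the place of $S^3(2)$. The first step is to express the sutured connected sum as a disjoint union with a contact $1$-handle attached,
\[(M,\gamma)\cong (M_1\sqcup Y(1),\, \gamma_1\cup\delta)\cup h,\]
where $\delta$ is the single suture on $\partial Y(1)\cong S^2$ and $h$ is a contact $1$-handle connecting $\partial Y(1)$ to a boundary component of $M_1$. Since one summand is already closed, no intermediate $S^3(2)$ is required. Applying Lemma \ref{lem 1 handle} and Lemma \ref{lem disjoint union} to this decomposition then yields the $\CC$-module isomorphism
\[\SHI(M,\gamma)\cong \SHI(M_1,\gamma_1)\otimes_\CC \SHI(Y(1)) = \SHI(M_1,\gamma_1)\otimes_\CC \operatorname{I}^\sharp(Y),\]
using the definitional identification $\SHI(Y(1))=\operatorname{I}^\sharp(Y)$ recalled in Section 2.

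Next, I would track $\zeta$ through these identifications to upgrade to a $\CC[X]/X^2$-module isomorphism. Under the $1$-handle attachment, the natural map
\[\iota_*\colon H_1(M_1,\partial M_1)\oplus H_1(Y(1),\partial Y(1))\to H_1(M,\partial M)\]
is an isomorphism (a contact $1$-handle attachment introduces no new relative $1$-cycles), and $H_1(Y(1),\partial Y(1))\cong H_1(Y)$ canonically since $\partial Y(1)\cong S^2$ has vanishing first homology. Under these identifications, $\zeta$ corresponds to $\zeta_1+\zeta_2$. Lemma \ref{lem 1 handle} then asserts that $X_\zeta$ on $\SHI(M,\gamma)$ is intertwined with $X_{\zeta_1+\zeta_2}$ on $\SHI(M_1\sqcup Y(1))$, and Lemma \ref{lem disjoint union} expands the latter as the claimed Leibniz rule.

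The only new verification, and the main (though minor) point to check, is that under the identification $\SHI(Y(1))=\operatorname{I}^\sharp(Y)$ the action $X_{\zeta_2}$ constructed in Section \ref{cons} agrees with the classical $\mu(\zeta_2)$-action on $\operatorname{I}^\sharp(Y)$. By Remark \ref{rem original closure}, representing $\zeta_2$ by a tangle with endpoints on $\delta$, forming the preclosure of $Y(1)$, and closing up produces a $1$-cycle $\widetilde{T}$ in the ambient closed $3$-manifold whose homology class equals the image of $\zeta_2\in H_1(Y)$ under the natural inclusion; the slant-product operator $\mu(\widetilde{T})$ is then by definition the action of $\zeta_2$ on $\operatorname{I}^\sharp(Y)$. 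Beyond this bookkeeping, no new input is required.
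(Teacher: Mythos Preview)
Your proposal is correct and follows essentially the same approach as the paper: the paper's proof consists of the single identity $(M,\gamma)\cong(M_1\sqcup Y(1),\gamma_1\cup \delta)\cup h$ together with the sentence ``An argument as in Proposition \ref{prop connected sum} applies here, too,'' and your write-up simply unpacks that sentence by invoking Lemmas \ref{lem 1 handle} and \ref{lem disjoint union} and the identification $\SHI(Y(1))=\operatorname{I}^\sharp(Y)$. The extra bookkeeping you supply (that $\iota_*$ is an isomorphism, that $H_1(Y(1),\partial Y(1))\cong H_1(Y)$, and that $X_{\zeta_2}$ agrees with $\mu(\zeta_2)$ on $\operatorname{I}^\sharp(Y)$) is exactly the detail the paper suppresses.
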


\begin{proof}
	We have \[(M,\gamma)\cong(M_1\sqcup Y(1),\gamma_1\cup \delta)\cup h.\]Here $h$ is a contact $1$-handle connecting $M_1$ and $Y$. An argument as in Proposition \ref{prop connected sum} applies here, too.
\end{proof}

We now discuss the interaction of the action and sutured manifold decompositions. The case without the action is treated in \cite{juhasz2008floer,kronheimer2010knots}, and the behaviour of the action is treated in \cite{ni2014homological} for Heegaard Floer theory.

\begin{prop}\label{prop sutured decomposition}
	Let $(M,\gamma)$ be a balanced sutured manifold, $\zeta\in H_1(M,\partial M)$, and let \[(M,\gamma)\overset{S}{\rsa}(M',\gamma')\]be a nice surface decomposition of balanced sutured manifolds. Then $\SHI(M',\gamma')$ is a direct summand of $\SHI(M,\gamma)$ as $\CC[X]/X^2$-modules. Here the action on $\SHI(M',\gamma')$ is defined as $X_{\iota_*(\zeta)}$, where \[\iota_*\colon H_1(M,\partial M)\to H_1(M',\partial M')\cong H_1(M,(\partial M)\cup S)\]is the map induced by inclusion. 
\end{prop}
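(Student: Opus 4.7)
The plan is to adapt Kronheimer--Mrowka's eigenspace-summand description of surface decompositions \cite{kronheimer2010knots} to the equivariant setting by exploiting commutativity of $\mu$-operators.

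First, I would choose a compatible closure. Following the argument in \cite{kronheimer2010knots}, one can select an auxiliary surface for $(M,\gamma)$ that extends across the arcs $\partial S \cap A(\gamma)$ so that the resulting marked odd closure $\scrd = (Y,R,r,m,\eta,\alpha)$ contains a closed, oriented surface $\bar{S} \subset Y$ capping off $S$. Niceness of the decomposition guarantees that the simultaneous top eigenspace of the commuting operators $\mu(R)$ and $\mu(\bar{S})$ inside $\uSHI(\scrd)$ is canonically isomorphic to $\uSHI(\scrd')$ for some marked odd closure $\scrd'$ of $(M',\gamma')$, and the other joint eigenspaces split off as a direct summand complement.

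Next, I would represent $\zeta$ by a balanced tangle $T \subset M$ meeting $S$ transversely, and form its closure $\widetilde{T} \subset Y$ via Remark \ref{rem original closure}. Since $\mu(R)$, $\mu(\bar{S})$, and $\mu([\widetilde{T}])$ pairwise commute on $I_*(Y)_{u \otimes w}$, the operator $\mu([\widetilde{T}])$ preserves the joint eigenspace, so $X_\zeta$ restricts to an endomorphism of the summand $\SHI(M',\gamma') \hookrightarrow \SHI(M,\gamma)$.

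The main step is then to identify this restricted operator with $X_{\iota_*(\zeta)}$. Cutting $M$ open along $S$ turns $T$ into a balanced tangle $T' \subset M'$ representing $\iota_*(\zeta)$, with new endpoints pairing up matchingly on the two copies of $S$ appearing in $\partial M'$. One must check that $\scrd'$ above can be arranged so that the tangle closure $\widetilde{T'} \subset Y'$ is identified with $\widetilde{T} \subset Y$ under the geometric correspondence between the pieces of the two closures; granting this, the operator $\mu([\widetilde{T}])$ matches $\mu([\widetilde{T'}])$ under the canonical isomorphism between $\uSHI(\scrd')$ and the joint eigenspace summand in $\uSHI(\scrd)$. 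Theorem \ref{thm BS naturality} then yields the desired equivariant statement up to scalars.

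The principal obstacle is the last piece of bookkeeping: arranging the closures and tangles so that the two tangle closures are geometrically and homologically identified under the eigenspace-summand isomorphism. This mirrors Ni's strategy \cite{ni2014homological} in the Heegaard Floer setting, which we will adapt.
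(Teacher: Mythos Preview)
Your proposal is correct and follows essentially the same route as the paper: build a closure $Y$ of $(M,\gamma)$ in which $S$ caps off to a closed surface $\bar{S}$, use commutativity of $\mu$-operators so that $X_\zeta$ preserves the relevant eigenspace summand, and verify that the tangle closures $\widetilde{T}\subset Y$ and $\widetilde{T}'\subset Y'$ agree under the diffeomorphism $Y'\cong Y$. The one step you gloss over but the paper makes explicit is that Kronheimer--Mrowka's argument first reduces (via \cite[Lemma~6.10]{kronheimer2010knots}) to the special case where $\partial S$ consists of curves linearly independent in $H_1(R_\pm(\gamma))$ by attaching product $1$-handles, and in the equivariant setting you need to check that this reduction respects the $\CC[X]/X^2$-module structure---this is precisely the content of Lemma~\ref{lem 1 handle}.
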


\begin{proof}
	Recall that the proof of \cite[Proposition 6.9]{kronheimer2010knots} relies on a reduction to a special case as demonstrated in \cite[Lemma 6.10]{kronheimer2010knots}. This reduction only involves the addition of $1$-handles, which preserves the $\CC[X]/X^2$-module structure by Lemma \ref{lem 1 handle}.  
	
	In this special case, we can assume that $S$ has no closed components, and the oriented boundary of $\partial S$ consists of $n$ simple closed curves $C_1^+,C_2^+,\dots,C_n^+$ that are linearly independent in $H_1(R_+(\gamma))$, and $C_1^-,C_2^-,\dots,C_n^-$ that are linearly independent in $H_1(R_-(\gamma))$. 
	
	Choosing the representative tangle $T$ of $\zeta$ appropriately, we can form a closure $Y=Y(M,\gamma)$ using an auxiliary surface $G$ and a gluing diffeomorphism $h$ as usual with additional requirements that $h$ maps $C_i^+$ to $C_i^-$ and $R_+(\gamma)\cap T$ to $R_-(\gamma)\cap T$.  Then $Y$ contains two distinguished closed surfaces: first the usual surface $R$, and second a surface $\overline{S}$ obtained by closing up $S$. The action of $\zeta$ on $\SHI(M,\gamma)$ can be identified with the action of the tangle closure $\mu(\widetilde{T})$ on $I_*(Y|R)$. 
	
	  \begin{figure}[!h]
		\centering
		\begin{tikzpicture}[scale=0.85]
			\begin{scope}[shift={(-2.5,0)}]
				\draw[thick] (0,4)--(-4.3,4) to [out=75,in=180] (-4,5)--(0.3,5);

				\draw[thick] (0,3)--(-4.3,3) to [out=-75,in=180] (-4,2)--(0.3,2);
				
				\draw[thick] (0,4) to [out=75,in=180] (0.3,5) to [out=0,in=90] (0.6,3.5) to [out=-90,in=0] (0.3,2) to [out=180,in=-75] (0,3);
				
				\draw[thick] (-3.7,4) to [out=-105,in=105] (-3.7,3);
				
				\draw (-2.3,4) to [out=75,in=180] (-2,5);
				\draw[dotted] (-2,5) to[out=0,in=75] (-1.7,4);
				\draw (-1.7,4) to [out=-105,in=105] (-1.7,3);
				\draw[dotted] (-1.7,3) to [out=-75,in=0] (-2,2);
				\draw (-2.3,3) to [out=-75,in=180] (-2,2);
				
				\draw[thick,red] (-2.8,4) to [out=85,in=-110] (-1.6,5);
				\draw[thick,red,dotted] (-1.6,5) to [out=-70,in=90] (-1.4,4);
				\draw[thick,red] (-1.4,4) to [out=-90,in=70] (-1.4,3);
				\draw[thick,red,dotted] (-1.4,3) to [out=-110,in=65] (-1.75,2);
				\draw[thick,red] (-1.75,2) to [out=115,in=-85] (-2.8,3);
				\node[right,red] at (-1.4,3.5) {$T$};
				
				\node[left] at (-1.7,3.7) {$S$};

				\draw (-3.0,2.9)--(-4.6,2.3);
				\node at (-4.8,2.2) {$R_+$};
				
				\draw (-1.0,4.1)--(-0.5,4.4);
				\node[above] at (-0.5,4.4) {$R_-$};
				
				\draw[fill=blue,blue] (-2.3,3) circle[radius=0.06];
				\node[above] at (-2.5,2.9) {$C_i^+$};
			\end{scope}
			
			\node at (-1,3.5) {$\rsa$};
			
			\begin{scope}[shift={(4,0)}]
				
				\draw[thick] (-2,4)--(-4.3,4) to [out=75,in=180] (-4,5)--(-1.7,5);
				\draw[thick] (-2,3)--(-4.3,3) to [out=-75,in=180] (-4,2)--(-1.7,2);
				\draw[thick] (-2,4) to [out=75,in=180] (-1.7,5) to [out=0,in=90] (-1.4,3.5) to [out=-90,in=0] (-1.7,2) to [out=180,in=-75] (-2,3);
				\draw[thick] (-3.7,4) to [out=-105,in=105] (-3.7,3);
				
				\draw[thick,shift={(3.2,0)}] (-2,4)--(-4.3,4) to [out=75,in=180] (-4,5)--(-1.7,5);
				\draw[thick,shift={(3.2,0)}] (-2,3)--(-4.3,3) to [out=-75,in=180] (-4,2)--(-1.7,2);
				\draw[thick,shift={(3.2,0)}] (-2,4) to [out=75,in=180] (-1.7,5) to [out=0,in=90] (-1.4,3.5) to [out=-90,in=0] (-1.7,2) to [out=180,in=-75] (-2,3);
				\draw[thick,shift={(3.2,0)}] (-3.7,4) to [out=-105,in=105] (-3.7,3);
				
				\draw[thick,red] (-2.8,4) to [out=85,in=-150] (-1.96,4.6);
				\draw[thick,red,shift={(1.2,0)}] (-2.26,4.6) to [out=30,in=-105] (-1.6,5);
				\draw[thick,red,dotted,shift={(1.2,0)}] (-1.6,5) to [out=-70,in=90] (-1.4,4);
				\draw[thick,red,shift={(1.2,0)}] (-1.4,4) to [out=-90,in=70] (-1.4,3);
				\draw[thick,red,dotted,shift={(1.2,0)}] (-1.4,3) to [out=-110,in=65] (-1.75,2);
				\draw[thick,red,shift={(1.2,0)}] (-2.26,2.4) to [out=-30,in=105] (-1.75,2);
				\draw[thick,red] (-1.96,2.4) to [out=150,in=-85] (-2.8,3);
				\node[right,red,shift={(1.2,0)}] at (-1.4,3.5) {$T'$};
				
				\draw[fill=blue,blue] (-1.1,3) circle[radius=0.06];
				\draw[fill=blue,blue] (-2,4) circle[radius=0.06];
				\node[above] at (-0.9,3) {$D_i^+$};
				\draw (-1.65,3.4)--(-2.5,3.3);
				\node[left] at (-2.4,3.3) {$I\times G_1$};
				
				\draw[line width=2,blue] (-1.1,3)--(-2,4);
			\end{scope}
			
			\begin{scope}[shift={(1,-4)}]
				\draw[thick] (-4.3,3)--(-4.3,4) to [out=90,in=180] (-4,5)--(0.3,5);
				
				\draw[thick] (-4.3,3) to [out=-90,in=180] (-4,2)--(0.3,2);
				
				\draw[thick] (0,4) to [out=90,in=180] (0.3,5) to [out=0,in=90] (0.6,3.5) to [out=-90,in=0] (0.3,2) to [out=180,in=-90] (0,3)--(0,4);
				
				\draw[thick] (-4.3,3.5)--(-2.5,3.5) to[out=0,in=-115] (-2.3,3.8) to [out=75,in=180] (-2,5);

				\draw[thick] (0,3.5)--(-1.8,3.5) to[out=180,in=105] (-2.3,3) to [out=-75,in=180] (-2,2);
				
				\draw[thick,dotted] (-2,5) to[out=0,in=90] (-2,3.5) to[out=-90,in=0] (-2,2);
				
				\draw[thick,red] (-2.8,3)--(-2.8,4) to [out=85,in=-110] (-1.6,5);
				\draw[thick,red,dotted] (-1.6,5) to [out=-70,in=90] (-1.4,4) to [out=-90,in=70] (-1.4,3) to [out=-110,in=65] (-1.75,2);
				\draw[thick,red] (-1.75,2) to [out=115,in=-85] (-2.8,3);
				\node[red] at (-3,4.2) {$\widetilde{T}$};
				
				\node at (-3.5,2.5) {$Y$};

			\end{scope}
		\end{tikzpicture}
		\caption{(Adapted from \cite[Figure 9]{kronheimer2010knots}.) Cutting along $S$ and re-gluing back results in the same tangle closure $\widetilde{T}$.}\label{fig cutting}
	\end{figure}
	
 We now form a closure $Y'$ of $(M',\gamma')$. Cutting along $S$ results in $2n$ new sutures $D_i^\pm\,(i=1,2,\dots,n)$ corresponding to $C_i^\pm$. To form the closure, we use an auxiliary surface $G'=G\cup G_1$, where $G_1$ is a collection of $n$ annuli. We glue $G_1\times [-1,1]$ to $(M',\gamma')$ by identifying the sutures $(\partial G_1)\times \{0\}$ with $D_i^\pm$.	The preclosure has two boundary components \[\overline{R}'_\pm=\overline{R}^\dagger_\pm\cup S_\pm\cup G_1\times \{\pm 1\},\]where $\overline{R}^\dagger_\pm$ are obtained by cutting open $\overline{R}_\pm$ along the circles $C_i^\pm$, and $S_\pm$ are copies of $S$. We can then choose a diffeomorphism $h'\colon \overline{R}'_+\to\overline{R}'_-$ such that it coincides with $h$ on $\overline{R}^\dagger_-$ and equals to identify on $S_+\cup G_1\times\{+1\}$. The resulting closure $Y'$ contains a distinguished surface $\overline{R}'$. Let $T'$ be the result of cutting out $T$ along $S$. Then $T'$ is a representative of $\iota_*(\zeta)$. Let $\widetilde{T}'$ be the tangle closure of $T'$ in $Y'$, and then the action of $\iota_*(\zeta)$ can be identified with the action of $\mu(\widetilde{T}')$ on $I_*(Y'|\overline{R}')$.
 
 Kronheimer and Mrowka showed that $Y'$ is diffeomorphic to $Y$. More precisely, there is a diffeomorphism $\phi\colon Y'\to Y$ that restricts to the identity on $M$ and sends $\overline{R}'$ to the double-curve sum of $R$ and $\overline{S}$. Further, it sends $\widetilde{T}'$ to $\widetilde{T}$, as explained in Figure \ref{fig cutting}. In \cite[Proposition 7.11]{kronheimer2010knots}, it is showed that $I_*(Y'|\overline{R}')$ is a direct summand of $I_*(Y|R)$ according to the generalized-eigenspace decomposition \cite[Corollary 7.6]{kronheimer2010knots}. As the actions of $\mu([\widetilde{T}'])$ and $\mu([\widetilde{T}])$ commute with the surface actions, $I_*(Y'|\overline{R}')$ is also a direct summand of $I_*(Y|R)$ as $\CC[X]/X^2$-modules.
\end{proof}

\begin{proof}[Proof of Theorem \ref{thm splitting}]
	Assume first that $L$ is split. Then $S^3(L)$ is a connected sum of two balanced sutured manifolds $S^3(K_1)\# S^3(K_2)$, where $K_1$ and $K_2$ are two components of $L$. By Proposition \ref{prop connected sum}, we have an isomorphism of $\CC[X]/X^2$-modules\[\KHI(L)=\SHI(S^3(L))\cong \SHI(S^3(K_1))\otimes_\CC\SHI(S^3(K_2))\otimes_\CC\SHI(S^3(2)).\]The action of $X$ on the right hand side is given by $\operatorname{Id}\otimes\operatorname{Id}\otimes X_\zeta$. The last term $\SHI(S^3(2))$ is a free $\CC[X]/X^2$-module of rank $1$ as $\zeta$ has a non-zero intersection number with the  splitting sphere. Hence, $\KHI(L)$ is free as a $\CC[X]/X^2$-module by \cite[Lemma 2.9]{wang2021link}.
	
	If $L$ is not split, then the link complement $S^3(L)$ is taut. We can then form a sequence of nice surface decompositions
	\[(M,\gamma)=(M_1,\gamma_1)\overset{S_1}{\rsa}(M_2,\gamma_2){\rsa}\dots{\rsa}(M_n,\gamma_n)\]
	 to obtain a product balanced sutured manifold $(M_n,\gamma_n)$ at the end \cite{gabai1983foliations,juhasz2008floer}. Then $\SHI(M_n,\gamma_n)$ has rank $1$ as a $\CC$-module by \cite[Theorem 7.18]{kronheimer2010knots}. By Proposition \ref{prop sutured decomposition}, $\SHI(M_n,\gamma_n)$ is a direct summand of $\SHI(S^3(L))$ as $\CC[X]/X^2$-modules. As a rank $1$ $\CC$-module cannot be free as a $\CC[X]/X^2$-module, $\KHI(L)=\SHI(S^3(L))$ is not free either. 
\end{proof}

\bibliographystyle{hplain}
\bibliography{ref}

\end{document}